\renewcommand{\baselinestretch}{\baselinestretch}
\renewcommand{\baselinestretch}{1.1}
\numberwithin{equation}{section}
\newtheorem{thm}{Theorem}[section]
\newcommand{\Mod}[1]{\ (\mathrm{mod}\ #1)}
\begin{document}

\title{Universal sums of generalized pentagonal numbers}
\author{Jangwon Ju}

\address{Department of Mathematics, University of Ulsan, Ulsan 44610, Republic of Korea}
\email{jjw@snu.ac.kr}
\thanks{}

\subjclass[2010]{11E12, 11E20}

\keywords{Generalized pentagonal numbers, Pentagonal theorem of 109.}

\begin{abstract} For an integer $x$, an integer of the form $P_5(x)=\frac{3x^2-x}2$ is called a generalized pentagonal number.
 For positive integers $\alpha_1,\dots,\alpha_k$, a sum $\Phi_{\alpha_1,\dots,\alpha_k}(x_1,x_2,\dots,x_k)=\alpha_1P_5(x_1)+\alpha_2P_5(x_2)+\cdots+\alpha_kP_5(x_k)$ of generalized pentagonal numbers is called {\it universal} if  $\Phi_{\alpha_1,\dots,\alpha_k}(x_1,x_2,\dots,x_k)=N$ has an integer solution $(x_1,x_2,\dots,x_k) \in \mathbb Z^k$ for any non-negative integer $N$.   In this article, we prove that there are exactly $234$ proper universal sums of generalized  pentagonal numbers. 
Furthermore, the ``pentagonal theorem of $109$" is proven, which states that an arbitrary sum $\Phi_{\alpha_1,\dots,\alpha_k}(x_1,x_2,\dots,x_k)$  is universal if and only if it represents the integers $1, 3, 8, 9, 11, 18, 19,  25, 27, 43, 98$, and $109$.
\end{abstract}

\maketitle

\section{Introduction}
Let $m$ be any positive integer greater than equal to three.  {\it A polygonal number of order $m$} (or an {\it $m$-gonal number}) is defined by the integer
$$
P_m(x)=\frac{(m-2)x^2-(m-4)x}{2}
$$
for some non-negative integer $x$.  If $x$ is an arbitrary integer, then we say $P_m(x)$ is  {\it a generalized polygonal number of order $m$} (or  {\it a generalized $m$-gonal number}).

A famous assertion of Fermat says that every non-negative integer is written as a sum of $m$ polygonal numbers of order $m$. 
In 1770, Lagrange proved that Fermat's assertion holds for $m=4$. In 1796, Gauss proved that the assertion holds for $m=3$. Finally, in 1813, Cauchy proved the assertion completely.
The Fermat polygonal number theorem stated above was generalized in many directions.

As a natural generalization of Lagrange's four square theorem, Ramanujan provided a list of 55 candidates of quaternary diagonal integral quadratic forms that represent all non-negative integers. 
In \cite{dick}, Dickson pointed out the diagonal quaternary quadratic form $x^2+2y^2+5z^2+5z^2$ in Ramanujan's list does not represent $15$ and confirmed that all the other 54 diagonal quadratic forms in the list represent all non-negative integers.
Conway, Miller and Schneeberger proved the so-called ``15-theorem", which states that a positive definite integral quadratic form represents  all non-negative integers if and only if it represents the integers $1,2,3,5,6,7,10,14,~\text{and}~15$, irrespective of its rank (for details, see \cite{b}).
 Recently,  Bhargava and Hanke \cite{BH} generalized the ``$15$-theorem" to the arbitrary positive definite integer-valued quadratic forms by proving the so-called ``290-theorem", which states that    
a positive-definite integer-valued quadratic form represents all non-negative integers if and only if it represents the integers
$$
\begin{array} {ll}
1,\ 2, \ 3,\ 5,\ 6,\ 7,\ 10,\ 13,\ 14,\ 15,\ 17,\ 19,\ 21,\ 22,\ 23,\ 26,\ 29,\\
30,\ 31,\ 34,\ 35,\ 37,\ 42,\ 58,\ 93,\ 110,\ 145,\ 203, \ \  \text{and} \  \ 290.
\end{array}
$$
Here, a quadratic form 
$$
f(x_1,x_2,\dots,x_n)=\sum_{1 \le i, j\le n} a_{ij} x_ix_j \quad (a_{ij}=a_{ji})
$$
 is called {\it integral} if $a_{ij}\in \mathbb Z$ for any $i,j$, and is called {\it integer-valued} if $a_{ii}\in \mathbb Z$ and $a_{ij}+a_{ji}\in\mathbb Z$ for any $i,j$.

For positive integers $\alpha_1$, $\alpha_2$, $\dots$, $\alpha_k$, a sum $\alpha_1P_m(x_1)+\alpha_2P_m(x_2)+\cdots+\alpha_kP_m(x_k)$ of generalized polygonal numbers of order $m$ is called {\it universal} if the diophantine equation 
$$
\alpha_1P_m(x_1)+\alpha_2P_m(x_2)+\cdots+\alpha_kP_m(x_k)=N
$$
 has an integer solution $(x_1,x_2,\dots,x_k)\in\mathbb{Z}^k$ for any non-negative integer $N$.
 
In 1862, Liouville generalized Gauss' triangular theorem by proving that a ternary sum $\alpha_1P_3(x_1)+\alpha_2P_3(x_2)+\alpha_3P_3(x_3)$ of triangular numbers is universal if and only if $(\alpha_1,\alpha_2,\alpha_3)$ is one of the following triples:
$$
(1,1,1),\quad(1,1,2),\quad(1,1,4),\quad(1,1,5),\quad(1,2,2),\quad(1,2,3),\quad(1,2,4).
$$
Recently, Bosma and Kane  \cite{bk} proved, so-called, the ``triangular theorem of eight" which states that an arbitrary sum $\alpha_1P_3(x_1)+\alpha_2P_3(x_2)+\cdots+\alpha_kP_3(x_k)$ of triangular numbers is universal if and only if it represents the integers $1,2,4,5$, and $8$.
One may consider this as a natural  generalization of the ``$15$-theorem". 
 
In \cite{octagonal} and \cite{sun}, it was proven that there are exactly $40$ quaternary universal sums of generalized octagonal numbers.
Furthermore, the ``octagonal theorem of sixty" was proven  in \cite{octagonal}, which states that  an arbitrary sum $a_1 P_8(x_1)+a_2P_8(x_2)+\cdots+a_kP_8(x_k)$ of  generalized octagonal numbers is universal if and only if it represents 
$$
1,\ 2, \ 3,\  4,\ 6,\  7,\ 9,\ 12,\ 13,\ 14,\ 18, \ \text{and} \  60.
$$ 

In \cite{pentagonal}, Oh proved that there are exactly $20$ ternary universal sums  of generalized pentagonal numbers, which was conjectured by Sun in \cite{sun2}. 
In fact, a ternary sum $\alpha_1P_5(x_1)+\alpha_2P_5(x_2)+\alpha_3P_5(x_3)$ of generalized pentagonal numbers is universal if and only if  $(\alpha_1,\alpha_2,\alpha_3)$ is one of the triples:
\begin{equation}\label{pent}
\begin{array}{ll}
(\alpha_1,\alpha_2,\alpha_3)= &(1,1,s),\quad \text{for $1\le s \le 10$ and $s \ne 7$,} \\
             ~                &(1,2,2), \ (1,2,3), \ (1,2,4),  \ (1,2,6), \ (1,2,8),  \\               
               ~              &(1,3,3), \ (1,3,4), \  (1,3,6), \ (1,3,7), \ (1,3,8), \ (1,3,9).
\end{array}
\end{equation}

In this paper, we prove that there are exactly $234$ proper universal sums of generalized  pentagonal numbers. 
Furthermore, we prove the ``pentagonal theorem of $109$" which states that an arbitrary sum $\alpha_1P_5(x_1)+\alpha_2P_5(x_2)+\cdots+\alpha_kP_5(x_k)$ of generalized pentagonal numbers is universal if and only if it represents the integers
$$
1,\ 3,\ 8,\ 9,\ 11,\ 18,\ 19,\ 25,\ 27,\ 43,\ 98,\ \text{and}\ 109.
$$
This  might also be considered as a natural generalization of the ``$15$-theorem".

Assume that a quadratic form $f(x_1,x_2,\dots,x_n)=\sum_{1 \le i, j\le n} a_{ij} x_ix_j \ (a_{ij}=a_{ji})$ 
is  positive definite and integral.
 The symmetric matrix corresponding to $f$ is defined by $M_f=(a_{ij})$. 
Note that our definition on the Gram matrix is slightly different from $\left(\frac{\partial^2f}{\partial x_i\partial x_j}\right)$.
If $a_{ij}=0$ for any $i\ne j$, then we simply write 
$$
f=\langle a_{11},a_{22},\dots,a_{nn}\rangle.
$$ 
For a non-negative integer $N$, if the diophantine equation $f(x_1,x_2,\dots,x_n)=N$ has an integer solution, then we say $N$ is represented by $f$. 
The genus of $f$, denoted by $\text{gen}(f)$, is the set of all quadratic forms that are isometric to $f$ over $\mathbb{Z}_p$ for any prime $p$. 
The number of isometry classes in $\text{gen}(f)$ is called the class number of $f$, and denoted by $h(f)$.

Any unexplained notations and terminologies can be found in \cite{ki} or \cite{om}.

\section{General tools}
Let $\alpha_1,\alpha_2\dots,\alpha_k$ be positive integers. Recall that a sum 
$$
\Phi_{\alpha_1,\alpha_2,\cdots,\alpha_k}(x_1,x_2,\dots,x_k)=\alpha_1P_5(x_1)+\alpha_2P_5(x_2)+\cdots+\alpha_kP_5(x_k)
$$
of generalized pentagonal numbers is said to be universal if the diophantine equation
$$
\Phi_{\alpha_1,\alpha_2,\cdots,\alpha_k}(x_1,x_2,\dots,x_k)=N
$$
has an integer solution $(x_1,x_2,\dots,x_k)\in\mathbb{Z}^k$ for any non-negative integer $N$.
We say the sum $\Phi_{\alpha_1,\alpha_2,\dots,\alpha_k}$ of generalized pentagonal numbers is {\it proper universal} if $\Phi_{\alpha_1,\alpha_2,\dots,\alpha_k}$ is universal  
and there does not exist a proper subset $\{i_1,i_2,\dots,i_u\} \subset \{1,2,\dots,k\}$
such that the partial sum $\Phi_{\alpha_{i_1},\dots,\alpha_{i_u}}$  is universal. 

One may easily show that the equation $\Phi_{\alpha_1,\alpha_2,\dots,\alpha_k}(x_1,x_2,\dots,x_k)=N$ has an integer solution if and only if the equation
$$
\alpha_1(6x_1-1)^2+\alpha_2(6x_2-1)^2+\cdots+\alpha_k(6x_k-1)^2=
24N+\alpha_1+\alpha_2+\cdots+\alpha_k
$$
has an integer solution. Note that if an integer $v$ is relatively prime to $6$, then one of the integers $v$ or $-v$ is congruent to $-1$ modulo $6$. 
Therefore, the sum $\Phi_{\alpha_1,\alpha_2,\dots,\alpha_k}$ of generalized pentagonal numbers is universal if and only if 
the diophantine equation
$$
\alpha_1x_1^2+\alpha_2x_2^2+\dots+\alpha_kx_k^2=24N+\alpha_1+\alpha_2+\cdots+\alpha_k
$$
has an integer solution $(x_1,x_2,\dots,x_k)\in\mathbb{Z}^k$ such that $\text{gcd}(x_1x_2\cdots x_k,6)=1$ for any non-negative integer $N$. 

In some particular cases, representations of quadratic forms with some congruence condition correspond to representations of a subform which is suitably taken (for details, see \cite{poly}).

In \cite{poly}, \cite{regular}, and \cite{pentagonal}, we developed a method on determining whether or not integers in an arithmetic progression are represented by some particular ternary quadratic form. 
We briefly introduce this method for those who are unfamiliar with it. 
 
 Let $d$ be a positive integer and let $a$ be a non-negative integer $(a\leq d)$. 
We define 
$$
S_{d,a}=\{dn+a \mid n \in \mathbb N \cup \{0\}\}.
$$
For integral ternary quadratic forms $f,g$, we define
$$
R(g,d,a)=\{v \in (\mathbb{Z}/d\mathbb{Z})^3 \mid vM_gv^t\equiv a \ (\text{mod }d) \}
$$
and
$$
R(f,g,d)=\{T\in M_3(\mathbb{Z}) \mid  T^tM_fT=d^2M_g \}.
$$
A coset (or, a vector in the coset) $v \in R(g,d,a)$ is said to be {\it good} with respect to $f,g,d \text{ and }a$ if there is a $T\in R(f,g,d)$ such that $\frac1d \cdot vT^t \in \mathbb{Z}^3$.  
The set of all good vectors in $R(g,d,a)$ is denoted by $R_f(g,d,a)$.    
If  $R(g,d,a)= R_f(g,d,a)$, we write  $g\prec_{d,a} f$. 
If $g\prec_{d,a} f$, then by Lemma 2.2 of \cite{regular},  we have 
\begin{equation}\label{gt}
S_{d,a}\cap Q(g) \subset Q(f).
\end{equation}
Note that $R(g,d,a)$ and $R(f,g,d)$ are both finite sets, so the determination whether $g\prec_{d,a}f$ or not could be computed in a finite time.
In general, if $d$ is large, then it is not easy to determine by hand whether or not two finite sets $R(g,d,a)$ and  $R_f(g,d,a)$ are equal.  
 A computer program for this based on MAPLE is available upon request to the author.

\section{Universal sums of generalized pentagonal numbers}

In this section, we determine all proper universal sums of generalized pentagonal numbers. 
Furthermore, we give an effective criterion on the universality of an arbitrary sum of generalized pentagonal numbers, which is a natural generalization of the ``$15$-theorem".

For positive integers $\alpha_1,\alpha_2,\dots,\alpha_k$, let 
$$
\Phi_{\alpha_1,\alpha_2,\dots,\alpha_k}(x_1,x_2,\dots,x_k)=\alpha_1P_5(x_1)+\alpha_2P_5(x_2)+\cdots+\alpha_kP_5(x_k)
$$ 
be a sum of generalized pentagonal numbers. 
For an integer $N$, if the diophantine equation 
$$
\Phi_{\alpha_1,\alpha_2,\dots,\alpha_k}(x_1,x_2,\dots,x_k)=N
$$ 
has an integer  solution $(x_1,x_2,\dots,x_k)\in\mathbb{Z}^k$, then we say the sum $\Phi_{\alpha_1,\alpha_2,\dots,\alpha_k}$ represents $N$, and we write $N\rightarrow\Phi_{\alpha_1,\alpha_2,\dots,\alpha_k}$. When the sum $\Phi_{\alpha_1,\alpha_2,\dots,\alpha_k}$ of generalized pentagonal numbers is not universal, the least positive integer that is not represented by $\Phi_{\alpha_1,\alpha_2,\dots,\alpha_k}$  is called the {\it truant} of $\Phi_{\alpha_1,\alpha_2,\dots,\alpha_k}$. 

 Since all of ternary universal sums of generalized pentagonal numbers are completely determined as in \eqref{pent}, we first consider the quaternary case.
  
\begin{thm}\label{quaternary}
There are exactly $90$ quaternary proper universal sums of generalized pentagonal numbers.
\end{thm}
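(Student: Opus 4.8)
The plan is to run the escalation (``truant'') method, exactly as in the proofs of the ``$15$-theorem'' and the ``triangular theorem of eight'', but over the additive monoid of generalized pentagonal numbers. By the reduction recalled in Section~2, $\Phi_{\alpha_1,\dots,\alpha_k}$ is universal if and only if the diagonal form $\langle \alpha_1,\dots,\alpha_k\rangle$ represents every integer in the progression $\{24N+(\alpha_1+\cdots+\alpha_k): N\in\mathbb N\cup\{0\}\}$ by a vector all of whose coordinates are prime to $6$. We may assume $\alpha_1\le\cdots\le\alpha_k$; since $1\to\Phi_{\alpha_1,\dots,\alpha_k}$ forces one summand $P_5(x)=1$ to occur while all others vanish, we must have $\alpha_1=1$. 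Moreover, if $\Phi$ is not universal with truant $t$, then representing $t$ after adjoining one more summand $\alpha_{k+1}P_5(x_{k+1})$ requires $P_5(x_{k+1})\ge 1$, hence $\alpha_{k+1}\le t$. In particular every proper universal quaternary sum is obtained from a \emph{non-universal} ordered triple $(1,\alpha_2,\alpha_3)$ with truant $t$ by adjoining some $\alpha_4$ with $\alpha_3\le\alpha_4\le t$ (an empty prescription when $t<\alpha_3$).

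First I would build the escalation tree through the ternary level: $\Phi_1$ has truant $3$, giving the binary escalators $(1,1),(1,2),(1,3)$, none of which is universal; computing their truants and escalating once more produces a finite, explicit list of ordered triples. The universal ones among them are precisely the $20$ triples of \eqref{pent} (Oh's theorem), and for each of the remaining, non-universal triples I would record the truant $t$. Escalating every such triple $(1,\alpha_2,\alpha_3)$ by all $\alpha_4\in\{\alpha_3,\alpha_3+1,\dots,t\}$ yields a finite list $\mathcal L$ of quaternary candidates, and by the previous paragraph every proper universal quaternary sum lies in $\mathcal L$. It then remains, for each member of $\mathcal L$, to decide (i) whether it is universal and (ii) whether it has a universal proper subsum; the latter is settled by comparing its four ternary subsums against \eqref{pent}, since binary and unary sums of generalized pentagonal numbers are never universal (density reasons). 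The members of $\mathcal L$ that are universal and contain no universal proper subsum constitute the sought set, and I would check that exactly $90$ of them remain.

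The non-universal candidates in $\mathcal L$ are eliminated at once by exhibiting, for each, a single unrepresented integer; in almost every case this comes from a genuine local obstruction at $2$, at $3$, or at an anisotropic prime, visible once one writes $\langle1,\alpha_2,\alpha_3,\alpha_4\rangle$ over the relevant $\mathbb Z_p$ together with the prime-to-$6$ condition. The work lies in the positive direction. For a candidate $(1,\alpha_2,\alpha_3,\alpha_4)$ one fixes one coordinate, say $x_4$, letting $P_5(x_4)$ run over the generalized pentagonal numbers $0,1,2,5,7,12,15,\dots$, and reduces universality to the assertion that for every non-negative integer $N$ at least one of the integers $N-\alpha_4P_5(x_4)\ (\ge 0)$ is represented by the ternary sum $\Phi_{1,\alpha_2,\alpha_3}$. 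Everything thus comes down to determining \emph{precisely} which integers — equivalently, which residue classes modulo suitable $d$ — are represented by each ternary subsum, and this is where the apparatus of Section~2 enters: for the relevant $d$ and $a$ I would produce an auxiliary ternary form $g$ with $h(g)=1$ and $g\prec_{d,a}f$, where $f$ is $\mathbb Z$-equivalent to $\langle 1,\alpha_2,\alpha_3\rangle$ (restricted appropriately to account for the prime-to-$6$ condition), so that by \eqref{gt} one has $S_{d,a}\cap Q(f)\supseteq S_{d,a}\cap Q(g)$ with the latter completely described by local conditions. Combining this description of $Q(\Phi_{1,\alpha_2,\alpha_3})$ with the freedom in the choice of $P_5(x_4)$ handles all but finitely many $N$, and the remaining small $N$ are checked directly.

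The main obstacle I anticipate is exactly this last step for the candidates whose relevant ternary subsum is ``bad'', i.e. has large class number and omits an entire square class along the pertinent progression (spinor or genuine exceptions), so that the exception set $\mathcal E=(\mathbb N\cup\{0\})\setminus Q(\Phi_{1,\alpha_2,\alpha_3})$ is infinite. For such cases one must choose the auxiliary $g$, the modulus $d$, and — crucially — which coordinate to fix and which values $P_5(x_4)$ to allow, carefully enough that the translates $\mathcal E+\alpha_4P_5(x_4)$ never jointly cover $\mathbb N\cup\{0\}$; sometimes no single ternary subsum suffices and one must argue with two of the four subsums simultaneously, or instead invoke that the quaternary form itself represents all sufficiently large integers in the admissible congruence class and reduce to a finite check. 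Carrying this bookkeeping of exceptional classes and small exceptions correctly and uniformly across the whole list $\mathcal L$, rather than for any one individual form, is the real labor behind the theorem.
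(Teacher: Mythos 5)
Your proposal follows essentially the same route as the paper: escalation via truants to show $\alpha_1=1$, $\alpha_2\in\{1,2,3\}$, bound $\alpha_3$ and then $\alpha_3\le\alpha_4\le t$ for the five non-universal ternary escalators, prune non-proper candidates by checking ternary subsums against \eqref{pent}, and verify universality of the survivors by fixing the fourth variable and applying the genus-plus-$\prec_{d,a}$ machinery of Section~2 together with congruence manipulations to enforce the prime-to-$6$ condition, watching for anisotropic-prime square classes and spinor exceptions (which is exactly where the paper's hardest case, $(1,1,11)$ with the exceptional class $1309\cdot11^{2l}$, lives). The only cosmetic difference is that you ask for an auxiliary form $g$ with $h(g)=1$, whereas the paper works with the full genus of a sublattice $f$ encoding the congruence condition and shows each non-trivial class satisfies $M_i\prec_{d,a}M_f$; this is a variant within the same framework.
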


\begin{proof}
For positive integers $\alpha_1,\alpha_2,\alpha_3,\alpha_4$, let $\Phi_{\alpha_1,\alpha_2,\alpha_3,\alpha_4}$ be a quaternary proper universal sum of generalized pentagonal numbers. 
Without loss of generality, we may assume that $\alpha_1\leq\alpha_2\leq \alpha_3 \leq\alpha_4$.
Since $1\rightarrow \Phi_{\alpha_1,\alpha_2,\alpha_3,\alpha_4}$, we have $\alpha_1=1$. 
Since the truant of $\Phi_1$ is 3 and $3\rightarrow\Phi_{1,\alpha_2,\alpha_3,\alpha_4}$, we have $\alpha_2=1,2$, or $3$. If $(\alpha_1,\alpha_2)=(1,1)$, then $1\leq\alpha_3\leq11$, for the truant of $\Phi_{1,1}$ is $11$ and $11\rightarrow\Phi_{1,1,\alpha_3,\alpha_4}$. Similarly, for $i=2$ or $3$,
if $(\alpha_1,\alpha_2)=(1,i)$, then $i\leq\alpha_3\leq i+6$,   for the truant of $\Phi_{1,i}$ is $i+6$ and $i+6\rightarrow\Phi_{1,i,\alpha_3,\alpha_4}$.
If $(\alpha_1,\alpha_2,\alpha_3)\not\eq(1,1,7),(1,1,11),(1,2,5),(1,2,7)$, and $(1,3,5)$, then each ternary sum $\Phi_{\alpha_1,\alpha_2,\alpha_3}$ of generalized pentagonal numbers is universal by \eqref{pent}.
Note that the truant $t$ of each $\Phi_{\alpha_1,\alpha_2,\alpha_3}$ is 
$$
t=
\begin{cases}
25& \quad\text{if~} (\alpha_1,\alpha_2,\alpha_3)=(1,1,7),\\
43& \quad\text{if~} (\alpha_1,\alpha_2,\alpha_3)=(1,1,11),\\
18& \quad\text{if~} (\alpha_1,\alpha_2,\alpha_3)=(1,2,5),\\
27& \quad\text{if~} (\alpha_1,\alpha_2,\alpha_3)=(1,2,7),\\
19& \quad\text{if~} (\alpha_1,\alpha_2,\alpha_3)=(1,3,5).
\end{cases}
$$
Therefore, $\alpha_3\leq\alpha_4\leq t$ for each possible case, where $t$ is the integer given above.
We show that there are exactly $90$ quaternary universal sums of generalized pentagonal numbers among the above candidates. For the complete list of proper universal sums of generalized pentagonal numbers, see Table 3.1.
\vskip 1pc
\begin{center}
\renewcommand{\arraystretch}{1}
\begin{tabular}{ccrcl}\multicolumn{5}{c}{{Table 3.1. Proper universal sums of generalized pentagonal numbers}}\\
\hline
& Sums&&$\alpha$ &\\ \hline
(3-1) & $\Phi_{1,1,\alpha}$ & \hspace{15mm}$1\leq\alpha\leq10$ &{and}  &$\alpha\not\eq7$\\
(3-2) & $\Phi_{1,2,\alpha}$ & $2\leq\alpha\leq8$ &{and}   &$\alpha\not\eq5,7$\\
(3-3) & $\Phi_{1,3,\alpha}$ & $3\leq\alpha\leq9$ &{and}   &$\alpha\not\eq5$\\
\hline
(4-1) & $\Phi_{1,1,7,\alpha}$ & $7\leq\alpha\leq25$ &{and}   &$\alpha\not\eq8,9,10$\\
(4-2) & $\Phi_{1,1,11,\alpha}$ & $11\leq\alpha\leq43$ &{and}   &$\alpha\not\eq22,33$\\
(4-3) & $\Phi_{1,2,5,\alpha}$ & $5\leq\alpha\leq18$ &{and}   &$\alpha\not\eq6,8$\\
(4-4) & $\Phi_{1,2,7,\alpha}$ & $7\leq\alpha\leq27$ &{and}   &$\alpha\not\eq8$\\
(4-5) & $\Phi_{1,3,5,\alpha}$ & $5\leq\alpha\leq19$ &{and}   &$\alpha\not\eq6,7,8,9$\\
\hline
(5-1) & $\Phi_{1,1,11,22,\alpha}$ & $\alpha=22,33$ &{or}   &$44\leq\alpha\leq98$\\
(5-2)& $\Phi_{1,1,11,33,\alpha}$ & $\alpha=33$ &{or}   &$44\leq\alpha\leq109$\\
\hline
\end{tabular}
\end{center}

In the case (4-1), we will explain how our method works in detail. Since everything is quite similar to this for all the other cases, we briefly provide all parameters needed for computations, in the remaining cases.
\bigskip 

\noindent\textbf{Case (4-1) $(\alpha_1,\alpha_2,\alpha_3)=(1,1,7)$}. It is enough to show that for any $\alpha$ such that $7\le \alpha \le 25$ and $\alpha \ne 8,9,10$, the equation
$$
x^2+y^2+7z^2+\alpha t^2=24N+9+\alpha
$$ 
has an integer solution $(x,y,z,t)\in\mathbb{Z}^4$ such that $\text{gcd}(xyzt,6)=1$ for any non-negative integer $N$.

Since the proofs are quite similar to each other, we only provide the proof of the case when $\alpha=21$. Let $24N+30=7^{2s}(24n+6)$ for some non-negative integers $n,s$ such that $24n+6\not\equiv0\Mod{7^2}$. 
For the case when $n=0$, note that for any $s\ge1$,
$$
(7^s)^2+(7^s)^2+7(5\cdot7^{s-1})^2+21(7^{s-1})^2=6\cdot7^{2s}
$$ 
If $1\leq n\leq 105$, then one may directly show that the equation
$$
x^2+y^2+7z^2+21t^2=24n+6
$$ 
has an integer solution $(x,y,z,t)\in\mathbb{Z}^4$ such that $\text{gcd}(xyzt,6)=1$.
Therefore, we assume that $n\geq106$.
Note that the genus of $f(x,y,z)=x^2+y^2+7z^2$ consists of 
$$
M_f=\langle1,1,7\rangle\quad\text{and}\quad M_2=\langle1\rangle\perp\begin{pmatrix}2&1\\1&4\end{pmatrix}.
$$
One may easily show that every non-negative integer not of the form $k\cdot 7^{2l+1}$ for any non-negative integers $l,k$ such that $\left(\frac k7\right)=-1$ is represented by $M_f$ or $M_2$ by 102:5 of \cite{om}, for  it is represented by $M_f$ over $\mathbb{Z}_p$ for any prime $p$.

Let $g$ be the quadratic form associated to $M_2$. 
Note that $R(g,4,1)$ consists of $24$ cosets. Actually, it is the union of following sets:
$$
\begin{array}{l}
R_1=\{(v_1,v_2,v_3)\in (\mathbb{Z}/4\mathbb{Z})^3\mid v_1\equiv1\Mod2,~v_2\equiv0\Mod2\},\\
R_2=\{(v_1,v_2,v_3)\in (\mathbb{Z}/4\mathbb{Z})^3\mid v_1\equiv v_2\equiv v_3\equiv 1 \Mod2\}.
\end{array}
$$
Furthermore, note that $R(f,g,4)$ consists of $64$ matrices and in particular, it contains the following two matrices:
$$
T_1=\begin{pmatrix}0&2&8\\4&0&0\\0&-2&0\end{pmatrix}\quad\text{and}\quad
T_2=\begin{pmatrix}0&2&-6\\4&0&0\\0&-2&-2\end{pmatrix}.
$$
For each $i=1,2$, one may easily show that for any $(v_1,v_2,v_3)\in R_i$,
$$
\frac14\cdot(v_1,v_2,v_3)\cdot T_i^t\in\mathbb{Z}^3.
$$
Therefore, $g\prec_{4,1}f$ holds; hereafter, we simply abbreviate $M_2\prec_{4,1}M_f$ when the associated quadratic forms satisfy the relation $g\prec_{4,1}f$.
Then by \eqref{gt}, every positive integer congruent to $1$ modulo $4$ which is represented by $g$ is also represented by $f$.
As a sample, for $(1,5,1)\in\mathbb{Z}^3$, note that $g(1,5,1)=65$ and $(1,5,1)\equiv(1,1,1)\Mod4\in R_f(g,4,1)$.
Then 
$$
f(1,1,-3)=f\left(\frac14\cdot(1,5,1)\cdot T_2^t\right)=g(1,5,1)=65.
$$

One may easily show that there is an integer $d\in\{1,5,7,11\}$ such that $24n+6-21d^2\equiv1\Mod4$ and  $24n+6-21d^2\not\eq k\cdot 7^{2l+1}$. Furthermore, since we are assuming that $n\ge 106$,  $24n+6-21d^2$ is a positive integer.
Therefore, the equation 
$$
x^2+y^2+7z^2=24n+6-21d^2
$$ 
has an integer solution $(x,y,z)=(a,b,c)\in\mathbb{Z}^3$ by \eqref{gt}.
Assume that $a\equiv b\equiv c\equiv 0\Mod3$. We may assume that $b\equiv c\Mod4$. 
If we define
$$
\tau=\frac16\begin{pmatrix}2&5&7\\-2&-2&14\\-2&1&-1\end{pmatrix},
$$ 
then one may easily check that $\tau(a,b,c)^t=(a_1,b_1,c_1)^t$ is also an integer solution of the equation
\begin{equation}\label{117}
x^2+y^2+7z^2=24n+6-21d^2
\end{equation} 
such that $b_1\equiv c_1\Mod4$.
Therefore, there is a positive integer $m$ such that $\tau^m(a,b,c)^t=(a_m,b_m,c_m)^t$ is an integer solution of Equation \eqref{117} and one of whose component is not divisible by $3$ or 
for any positive integer $m$, $\tau^m(a,b,c)^t=(a_m,b_m,c_m)^t$  is an integer solution of Equation \eqref{117}  each of whose component is divisible by $3$.
Since there are only finitely many integer solution of Equation \eqref{117} 
and $\tau$ has an infinite order, the latter is impossible unless $(a,b,c)$ is an eigenvector of $\tau$. Note that the $\pm(1,-3,1)$ are the only integral primitive eigenvectors of $\tau$. 
Since $u^2+(-3u)^2+7u^2=(-u)^2+(-3u)^2+7u^2$ for any $u\in\mathbb{Z}$, we may assume that $(a,b,c)$ is not an eigenvector of $\tau$. Therefore, the equation 
$$
x^2+y^2+7z^2=24n+6-21d^2
$$ 
has an integer solution $(x,y,z)=(a,b,c)\in\mathbb{Z}^3$ such that $abc\not\equiv0\Mod 3$. 
We may assume that $a\equiv1\Mod2$. 
If $b\equiv c\equiv 1\Mod2$, then we are done. Assume $b\equiv c\equiv 0\Mod2$. Note that $b\equiv c\Mod4$. Let $b=2i$ and $c=2j$. Assume $i\equiv j\equiv1\Mod2$. 
Then 
$$
b^2+7c^2=\left(\frac{3i\pm7j}{2}\right)^2+7\left(\frac{i\mp3j}{2}\right)^2
$$
and one of the integer $\frac{i-3j}{2}$ or $\frac{i+3j}{2}$ is relatively prime to $6$. Therefore, there are integers $b_1, c_1$ such that $b^2+7c^2=b_1^2+7c_1^2$ and $\text{gcd}(b_1c_1,6)=1$. Assume $i\equiv j\equiv 0\Mod2$. There are integers $i_1,j_1$ such that 
$b^2+7c^2=4^s(i_1^2+7j_1^2)$ for some integer $s\geq2$ with $i_1\equiv j_1\equiv1\Mod2$ or $i_1\not\equiv j_1\Mod2$.
If $i_1\equiv j_1\equiv1\Mod2$, then 
\begin{equation}\label{4}
4(i_1^2+7j_1^2)=\left(\frac{3i_1\pm7j_1}{2}\right)^2+7\left(\frac{i_1\mp3j_1}{2}\right)^2
\end{equation}
and one of the integers $\frac{i_1-3j_1}{2}$ or $\frac{i_1+3j_1}{2}$ is relatively prime to $6$. 
Therefore, there are integers $b_2,c_2$ such that $b^2+7c^2=b_2^2+7c_2^2$ such that $\text{gcd}(b_2c_2,6)=1$.
Assume $i_1\not\equiv j_1\Mod2$. Note that 
$$
16(i_1^2+7j_1^2)=(3i_1\pm7j_1)^2+7(3j_1\mp i_1)^2.
$$
Therefore, there are integers $b_3,c_3$ such that $b^2+7c^2=b_3^2+7c_3^2$ and $\text{gcd}(b_3c_3,6)=1$ by Equation \eqref{4}. 
Hence, the equation 
$$
x^2+y^2+7z^2+21t^2=24N+30
$$ 
has an integer solution $(x,y,z,t)\in\mathbb{Z}^4$ such that $\text{gcd}(xyzt,6)=1$.
\vskip 0.5pc
\noindent\textbf{Case (4-2)} $(\alpha_1,\alpha_2,\alpha_3)=(1,1,11)$.
It is enough to show that for any $\alpha$ such that $11\leq\alpha\leq43$ and $\alpha\not\eq22,33$, the equation
$$
x^2+y^2+11z^2+\alpha t^2=24N+13+\alpha
$$ 
has an integer solution $(x,y,z,t)\in\mathbb{Z}^4$ such that $\text{gcd}(xyzt,6)=1$ for any non-negative integer $N$.

Since the proofs are quite similar to each other, we only provide the proof of the case $\alpha=11$. Let $24N+24=11^{2s}\cdot 24n$ for some non-negative integers $n,s$ such that $24n\not\equiv0\Mod{11^2}$. 
If $1\leq n\leq 55$, then one may directly show that the equation 
$$
x^2+y^2+11z^2+11t^2=24n
$$ 
has an integer solution $(x,y,z,t)\in\mathbb{Z}^4$ such that $\text{gcd}(xyzt,6)=1$. 
Therefore, we assume $n\geq56$. 
Note that the genus of $f(x,y,z)=(6x+y)^2+y^2+11z^2$ consists of 
$$
M_f=\langle2,11,18\rangle,~ 
M_2=\begin{pmatrix}5&1&1\\1&7&3\\1&3&13\end{pmatrix},~ M_3=\begin{pmatrix}4&2&2\\2&5&0\\2&0&26\end{pmatrix},~\text{and}~ M_4=\langle2,2,99\rangle.
$$
One may easily show that every non-negative integer congruent to $13$ mod $24$ which is not of the form $k\cdot11^{2l+1}$ for any non-negative integers $l,k$ such that $\left(\frac{k}{11}\right)=-1$ is represented by $M_f$, $M_2$, $M_3$ or $M_4$ by 102:5 of \cite{om}, for it is represented by $M_f$ over $\mathbb{Z}_p$ for any prime $p$. 
Note that 
\begin{equation}\label{34}
M_3\prec_{24,13}M_2\quad \text{and} \quad M_4\prec_{24,13}M_2.
\end{equation}
Therefore, every positive integer congruent to $13$ modulo $24$ which is represented by $M_3$ or $M_4$ is also represented by $M_2$. 

 Let $g$ be the quadratic form associated with $M_2$. We show that every positive integer congruent to $13$ modulo $24$ which is represented by $g$ is also represented by $f$, provided it is not of the form $1309\cdot11^{2l}$ for any non-negative integer $l$.
At first, assume that $g(v)=144n_1+13$ for some non-negative integer $n_1$ and $v\in\mathbb{Z}^3$. 
One may check that there are exactly $55296$ vectors in $R(g,144,13)$ and $464$ matrices in $R(f,g,144)$. Furthermore, all vectors in $R(g,144,13)$ are good vectors with respect to $f,g,144$ and $13$ except $240$ vectors. 
Note  that $R(g,144,13)\setminus R_f(g,144,13)$ is the union of the following two sets. 
$$
\setlength\arraycolsep{0pt}{
\begin{array}{ll}
P_1\!=\!\{(v_1,v_2,v_3)\in R(g,14&4,13) : v_1\equiv3\Mod6,~v_2\equiv\pm4\Mod{24},\\
&v_3\equiv6\Mod{12}, ~ -8v_1+3v_2+16v_3\equiv0\Mod{36}\}\\
P_2\!=\!\{(v_1,v_2,v_3)\in R(g,14&4,13) : v_1\equiv\pm5,\pm11\Mod{24},v_2\equiv\pm4\Mod{24},\\
&v_3\equiv\pm6\Mod{24},~52v_1+43v_2+84v_3\equiv0\Mod{144}\}
\end{array}}
$$   
Now, define
$$
T_1=\begin{pmatrix}-88&6&-184\\112&12&-80\\-8&102&88\end{pmatrix},\quad
T_2=\begin{pmatrix}-92&-101&84\\-88&38&-120\\20&83&132\end{pmatrix}.
$$
Note that $T_i^tM_gT_i=144^2M_g$ for $i=1,2$. 
For any vector $u\in\mathbb{Z}^3$ such that $u\Mod{144}\in P_1$, 
\begin{equation}\label{P_1}
\frac{1}{144}\cdot u T_1^t\in\mathbb{Z}^3 \text{ and } \frac{1}{144}\cdot uT_1^t\Mod{144}\in P_2\cup R_f(g,144,13).
\end{equation}
Similarly, for any vector $u\in\mathbb{Z}^3$ such that $u\Mod{144}\in P_2$, 
\begin{equation}\label{P_2}
\frac{1}{144}\cdot u T_2^t\in\mathbb{Z}^3 \text{ and } \frac{1}{144}\cdot uT_2^t\Mod{144}\in P_1\cup R_f(g,144,13).
\end{equation}
All computations were done by a computer program based on MAPLE.
If $v\Mod{144}$ is a good vector with respect to $f,g,144$ and $13$, then there is a $T\in R(f,g,144)$ such that $\frac{1}{144}\cdot vT^t\in\mathbb{Z}^3\text{ and } f\left(\frac{1}{144}\cdot vT^t\right)=144n_1+13$.
Assume that $v\Mod{144}$ is contained in $R(g,144,13)\setminus R_f(g,144,13)$.
We may further assume that $v\Mod{144}\in P_1$ by \eqref{P_2}.
Then by \eqref{P_1}, we know that 
$$
\frac{1}{144}\cdot vT_1^t\in\mathbb{Z}^3 \text{ and } \frac{1}{144}\cdot vT_1\Mod{144}\in P_2\cup R_f(g,144,13).
$$
If $\frac{1}{144}\cdot vT_1^t\Mod{144}\in R_f(g,144,13)$, then we are done. 
Assume $\frac{1}{144}\cdot vT_1^t\Mod{144}\in P_2$.
Then by \eqref{P_2}, we know that 
$$
\left(\frac{1}{144}\right)^2\cdot vT_1^tT_2^t\in\mathbb{Z}^3 \text{ and } \left(\frac{1}{144}\right)^2\cdot vT_1^tT_2^t\Mod{144}\in P_1\cup R_f(g,144,13).
$$
Now, inductively, there are three possibilities:
\begin{enumerate} 
\item[(i)] there is a positive integer $m$ such that
\begin{small}
$$
\left(\frac{1}{144}\right)^{2m}\cdot v(T_1^tT_2^t)^{m}\in \mathbb{Z}^3 \text{ and }
\left(\frac{1}{144}\right)^{2m}\cdot v(T_1^tT_2^t)^{m}\Mod{144}\in R_f(g,144,13);
$$
\end{small}
\item[(ii)] there is a positive integer $m$ such that 
\begin{small}
$$
\left(\frac{1}{144}\right)^{2m+1}\cdot v(T_1^tT_2^t)^{m}T_1^t\in \mathbb{Z}^3 \text{ and }
\left(\frac{1}{144}\right)^{2m+1}\cdot v(T_1^tT_2^t)^{m}T_1^t\Mod{144}\in R_f(g,144,13);
$$
\end{small}
\item[(iii)] for any positive integer $m$,
\begin{small}
$$
\left(\frac{1}{144}\right)^{2m}\cdot v(T_1^tT_2^t)^{m}\in \mathbb{Z}^3 \text{ and }
\left(\frac{1}{144}\right)^{2m}\cdot v(T_1^tT_2^t)^{m}\Mod{144}\in P_1.
$$  
\end{small}
\end{enumerate}
Since there are only finitely many integer solution of $g(x,y,z)=144n_1+13$ and $\left(\frac{1}{144}\right)^2\cdot T_2T_1$ has an infinite order, the latter is impossible unless $v$ is an eigenvector of $T_2T_1$.
Note that $\pm(9,4,6)$ are the only integral primitive eigenvectors of $T_2T_1$.
Hence, if $144n_1+13$ is not of the form $Q(\pm9t,\pm4t,\pm6t)=1309t^2$ for some positive integer $t$, then it is also represented by $f$.
Assume that $144n_1+13=1309t^2$ for a positive integer $t$.
Further assume that $t$ has a prime divisor relatively prime to $2\cdot3\cdot11$. 
Since $\text{gen}(f)=\text{spn}(f)$ and $M_2,M_3$ and $M_4$ represent $1309$, $f$ represents $1309t^2$ by Lemma 2.4 in \cite{poly}.
Note that $f$ represents $1309\cdot2^2$ and $1309\cdot3^2$.\
Therefore if $144n_1+13\not\eq1309\cdot11^{2l}$ for any non-negative integer $l$, then $144n_1+13$ is also represented by $f$. 
Similary, one may prove that every positive integer congruent to $r$ modulo $144$ that is represented by $g$, is also represented by $f$, provided it is not of the form $1309\cdot11^{2l}$, for any $r\in\{37,61,85,109,133\}$.
Then by \eqref{34}, we know that every positive integer congruent to $13$ modulo $24$ which is not 
of the form $k\cdot11^{2l+1}$ and $1309\cdot11^{2l}$, for any non-negative integers $l,k$ such that $\left(\frac{k}{11}\right)=-1$ is represented by $f$.

One may easily show that there is an integer  $d\in\{1,5,7,11\}$ such that $24n-11d^2$ is not of the form $1309\cdot11^{2l}$ and $k\cdot11^{2l+1}$ for any non-negative integers $l,k$ such that $\left(\frac{k}{11}\right)=-1$. 
Furthermore, since we are assuming that $n\geq56$, $24n-11d^2$ is a positive integer.
Therefore, the equation 
$$
x^2+y^2+11z^2=24n-11d^2
$$ 
has an integer solution $(x,y,z)\in\mathbb{Z}^3$ such that $x\equiv y\Mod6$. 
Then one may easily show that $\text{gcd}(xyz,6)=1$.
This completes the proof.
\vskip 0.5pc
\noindent\textbf{Case (4-3)} $(\alpha_1,\alpha_2,\alpha_3)=(1,2,5)$.
It is enough to show that for any $\alpha$ such that $5\leq\alpha\leq18$ and $\alpha\not\eq6,8$, the equation
$$
x^2+2y^2+5z^2+\alpha t^2=24N+8+\alpha
$$
has an integer solution $(x,y,z,t)\in\mathbb{Z}^4$ such that $\text{gcd}(xyzt,6)=1$ for any non-negative integer $N$. 

Since the proofs are quite similar to each other, we only provide the proof of the case 
$\alpha=15$.
Let $24N+23=5^{2s}(24n+23)$ for some non-negative integers $n,s$ such that $24n+23\not\equiv0\Mod{5^2}$. 
If $0\leq n \leq29$, then one may directly check that the equation
$$
x^2+2y^2+5z^2+15t^2=24n+23
$$ 
has an integer solution $(x,y,z,t)\in\mathbb{Z}^4$ such that $\text{gcd}(xyzt,6)=1$. 
Therefore, we assume $n\geq30$. 
Since $h(\langle1,2,5\rangle)=1$, one may easily show that every non-negative integer not of the form $k\cdot5^{2l+1}$ for any non-negative integers $l,k$ such that $\left(\frac{k}{5}\right)=-1$ is represented by $\langle1,2,5\rangle$. 
One may easily show that there is an integer $d\in\{1,5,7\}$ such that $24n+23-15d^2\not\eq k\cdot5^{2l+1}$.
Furthermore, since we are assuming that $n\geq30$, $24n+23-15d^2$ is a positive integer. 
Therefore, the equation
$$
x^2+2y^2+5z^2=24n+23-15d^2
$$ 
has an integer solution $(x,y,z)=(a,b,c)\in\mathbb{Z}^3$.
Assume $a\equiv b\equiv0\Mod3$.
If  $a^2+2b^2\not\eq0$, then there are integers $a_1,b_1$ such that $a^2+2b^2=a_1^2+2b_1^2$ and $a_1b_1\not\equiv0\Mod3$ by Theorem 9 of \cite{Jones} (see also \cite{Jagy}, and for more generalization see \cite{oy}).
If $a^2+2b^2=0$, then $c\equiv0\Mod4$. Let $c=4c_1$. 
Then $5c^2=80c_1^2=(5c_1)^2+2(5c_1)^2+5c_1^2$ with $c_1\not\equiv0\Mod3$. 
Assume $a\equiv c\equiv0\Mod3$. 
If $a^2+5c^2\not\eq0$, then there are integers $a_2,c_2$ such that $a^2+5c^2=a_2^2+5c_2^2$ and $a_2c_2\not\equiv0\Mod3$ by Theorem 9 of \cite{Jones}.
If $a^2+5c^2=0$, then $b\equiv0\Mod2$. Let $b=2b_2$. Then $2b^2=8b_2^2=b_2^2+2b_2^2+5b_2^2$ with $b_2\not\equiv0\Mod3$. 
Therefore, the equation
$$
x^2+2y^2+5z^2=24n+23-15d^2
$$ 
has an integer solution $(x,y,z)=(a,b,c)\in\mathbb{Z}^3$ such that $abc\not\equiv0\Mod3$. By taking suitable signs of $a,b$ and $c$, we may assume that $a\equiv b\equiv c\Mod3$.
Then either $a\equiv b\equiv c\equiv1\Mod2$ or $a\equiv c\Mod4$ and $b\equiv0\Mod2$.
Assume that  $a\equiv c\Mod4$ and $b\equiv0\Mod2$.
If we define 
$$
\tau=\frac14\begin{pmatrix}3&2&5\\-1&-2&5\\-1&2&1\end{pmatrix},
$$ 
then $\tau(a,b,c)^t=(a_1,b_1,c_1)^t$ is also an integer solution of the equation
\begin{equation}\label{125}
x^2+2y^2+5z^2=24n+23-15d^2.
\end{equation} 
 Note that $a_1b_1c_1\not\equiv0\Mod3$ and $a_1\not\equiv b_1\equiv c_1\Mod3$.
Assume  $a_1\equiv c_1\Mod4$ and $b_1\equiv0\Mod2$. 
Then $\tau(a_1,b_1,c_1)^t=(a_2,b_2,c_2)^t$ is also an integer solution of Equation \eqref{125} such that $a_2b_2c_2\not\equiv0\Mod3$ and $a_2\equiv b_2\equiv c_2\Mod3$.
Therefore, either there is a positive integer $m$ such that $\tau^m(a,b,c)^t=(a_m,b_m,c_m)^t$ is an integer solution of Equation \eqref{125} satisfying $a_mb_mc_m\not\equiv0\Mod3$ and $a_m\equiv b_m\equiv c_m\equiv1\Mod2$ or 
for any positive integer $m$, $\tau^m(a,b,c)^t=(a_m,b_m,c_m)^t$  is an integer solution of Equation \eqref{125}  such that $a_mb_mc_m\not\equiv0\Mod3$, $a_m\equiv c_m\Mod4$ and $b_m\equiv0\Mod2$.
Since there are only finitely many integer solution of Equation \eqref{125} 
and $\tau$ has an infinite order, the latter is impossible unless $(a,b,c)$ is an eigenvector of $\tau$. 
Note that $\pm(0,-5,2)$ are the only integral primitive eigenvectors of $\tau$. 
Since $a\not\equiv0\Mod3$, we may assume that $(a,b,c)$ is not an eigenvector of $\tau$. 
Therefore, the equation 
$$
x^2+2y^2+5z^2=24n+23-15d^2
$$
has an integer solution $(x,y,z)=(a,b,c)\in\mathbb{Z}^3$ such that $\text{gcd}(abc,6)=1$. This completes the proof. 
\vskip 0.5pc
\noindent\textbf{Case (4-4)} $(\alpha_1,\alpha_2,\alpha_3)=(1,2,7)$.
It is enough to show that for any $\alpha$ such that $7\leq\alpha\leq27$ and $\alpha\not\eq8$, the equation
$$
x^2+2y^2+7z^2+\alpha t^2=24N+10+\alpha
$$ 
has an integer solution $(x,y,z,t)\in\mathbb{Z}^4$ such that $\text{gcd}(xyzt,6)=1$ for any non-negative integer $N$. 

Since the proofs are quite similar to each other, we only provide the proof of the case $\alpha=21$.
Let $24N+31=7^{2s}(24n+7)$ for some non-negative integers $n,s$ such that $24n+7\not\equiv0\Mod{7^2}$. 
For the case when $n=0$, note that for any $s\geq1$
$$
(7^s)^2+2(7^s)^2+7(5\cdot7^{s-1})^2+21(7^{s-1})^2=7^{2s+1}.
$$ 
If $1\leq n\leq 105$, one may directly check that the equation
$$
x^2+2y^2+7z^2+21t^2=24n+7
$$ 
has an integer solution $(x,y,z,t)\in\mathbb{Z}^4$ such that $\text{gcd}(xyzt,6)=1$. 
Therefore, we assume $n\geq106$. 
Note that the genus of $f(x,y,z)=(3x+y)^2+2y^2+7z^2$ consists of 
$$
M_f=\langle3,6,7\rangle\quad\text{and}\quad M_2=\langle1\rangle\perp\begin{pmatrix}9&3\\3&15\end{pmatrix}.
$$
One may easily show that every non-negative integer congruent to $1$ modulo $3$ which is not of the form $k\cdot7^{2l+1}$ for any non-negative integers $l,k$ such that $\left(\frac{k}{7}\right)=-1$ is represented by $M_f$ or $M_2$ by 102:5 of \cite{om}, for it is represented by $M_f$ over $\mathbb{Z}_p$ for any prime $p$.
Note that $M_2\prec_{8,2}M_f$. 
One may easily show that there is an integer $d\in\{1,5,7,11\}$ such that $24n+7-21d^2\equiv1\Mod3$, $24n+7-21d^2\equiv2\Mod8$ and $24n+7-21d^2\not\eq k\cdot7^{2l+1}$.
Furthermore, since we are assuming that $n\geq106$, $24n+7-21d^2$ is a positive integer.
Therefore, the equation
$$
x^2+2y^2+7z^2=24n+7-21d^2
$$ 
has an integer solution $(x,y,z)=(a,b,c)\in\mathbb{Z}^3$ such that $a\equiv b\Mod3$ by \eqref{gt}.
Assume $a\equiv b\equiv0\Mod3$.
Since $7c^2\not\equiv2\Mod{8}$, we know that $a^2+2b^2\not\eq0$. 
Then there are integers $a_1,b_1$ such that $a^2+2b^2=a_1^2+2b_1^2$ and $a_1b_1\not\equiv0\Mod3$ by Theorem 9 of \cite{Jones}.
Therefore, the equation 
$$
x^2+2y^2+7z^2=24n+7-21d^2
$$ 
has an integer solution $(x,y,z)=(a,b,c)\in\mathbb{Z}^3$ such that $abc\not\equiv0\Mod3$.
Note that $b\equiv1\Mod2$ and either $a\equiv c\equiv1\Mod2$ or $a\equiv c\equiv0\Mod2$ and $a\equiv c\Mod4$. 
Similarly as in the proof of  Case (4-1), there are integers $a_2,c_1$ such that $a^2+7c^2=a_2^2+7c_1^2$ with $\text{gcd}(a_2c_1,6)=1$.
This completes the proof.
\vskip 0.5pc
\noindent\textbf{Case (4-5)} $(\alpha_1,\alpha_2,\alpha_3)=(1,3,5)$.
It is enough to show that for any $\alpha$ such that $5\leq\alpha\leq19$ and $\alpha\not\eq6,7,8,9$, the equation
$$
x^2+3y^2+5z^2+\alpha t^2=24N+9+\alpha
$$ 
has an integer solution $(x,y,z,t)\in\mathbb{Z}^4$ such that $\text{gcd}(xyzt,6)=1$ for any non-negative integer $N$.

Since the proofs are quite similar to each other, we only provide the proof of the case $\alpha=15$. 
Let $24N+24=5^{2s}\cdot24n$ for some non-negative integers $n,s$ such that $24n\not\equiv0\Mod{5^2}$.
If $1\leq n\leq 30$, then one may directly check that the equation 
$$
x^2+3y^2+5z^2=24n
$$ 
has an integer solution $(x,y,z,t)\in\mathbb{Z}^4$ such that $\text{gcd}(xyzt,6)=1$.
Therefore, we assume $n\geq31$.
Note that the genus of $f(x,y,z)=(2x+z)^2+3(2y+z)^2+5z^2$ consists of 
$$
M_f=\begin{pmatrix}4&2&2\\2&9&3\\2&3&9\end{pmatrix}\quad\text{and}\quad M_2=\langle1\rangle\perp\begin{pmatrix}8&4\\4&32\end{pmatrix}.
$$
One may easily show that every non-negative integer congruent to $1$ modulo $8$ which is not of the form $k\cdot5^{2l+1}$ for any non-negative integers $l,k$ such that $\left(\frac{k}{5}\right)=-1$ is represented by $M_f$ or $M_2$ by 102:5 of \cite{om}, for it is represented by $M_f$ over $\mathbb{Z}_p$ for any prime $p$.
Note that $M_2\prec_{24,9} M_f$.
One may easily show that there is an integer $d\in\{1,5,7\}$ such that $24n-15d^2\equiv9\Mod{24}$ and $24n-15d^2\not\eq k\cdot5^{2l+1}$ for any non-negative integers $l,k$ such that $\left(\frac{k}{5}\right)=-1$.
Furthermore, since we are assuming that $n\geq31$, $24n-15d^2$ is a positive integer.
Therefore, the equation 
$$
x^2+3y^2+5z^2=24n-15d^2
$$ 
has an integer solution $(x,y,z)=(a,b,c)\in\mathbb{Z}^3$ such that $a\equiv b\equiv c\equiv1\Mod2$ by \eqref{gt}.
Assume $a\equiv c\equiv0\Mod3$. Since $a\equiv c\equiv1\Mod2$, $a^2+5c^2\not\eq0$.
By Theorem 9 of \cite{Jones}, there are integers $a_1,c_1$ such that $a^2+5c^2=a_1^2+5c_1^2$ with $a_1c_1\not\equiv0\Mod3$.
Furthermore, $a_1\equiv c_1\equiv1\Mod2$.
Therefore, the equation 
$$
x^2+3y^2+5z^2=24n-15d^2
$$ 
has an integer solution $(x,y,z)=(a,b,c)\in\mathbb{Z}^3$ such that $a\equiv b\equiv c\equiv1\Mod2$ and $ac\not\equiv0\Mod3$.
If $b\not\equiv0\Mod3$, then we are done.
Assume $b\equiv0\Mod3$. 
Since $a\equiv b\equiv1\Mod2$, $a^2+3b^2\not\eq0$.
Then 
$$
a^2+3b^2=\left(\frac{a\pm3b}{2}\right)^2+3\left(\frac{a\mp b}{2}\right)^2,
$$
where $\left(\frac{a\pm3b}{2}\right)\cdot\left(\frac{a\mp b}{2}\right)\not\equiv0\Mod3$ and one of the integers $\frac{a-b}{2}$ or $\frac{a+b}{2}$ is odd. 
Therefore, the equation
$$
x^2+3y^2+5z^2+15t^2=24N+24
$$ 
has an integer solution $(x,y,z,t)\in\mathbb{Z}^4$ such that $\text{gcd}(xyzt,6)=1$. This completes the proof.\end{proof}

\begin{thm}\label{quinary}
There are exactly $124$ quinary proper universal sums of generalized pentagonal numbers.
\end{thm}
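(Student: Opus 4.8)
The plan is to reproduce, for rank five, the two-step structure of the proof of Theorem \ref{quaternary}: a truant/escalator argument that cuts the candidate coefficient vectors down to exactly the two families in rows (5-1) and (5-2) of Table 3.1, followed by a reduction-to-a-subform argument, modelled on Case (4-2), showing that each candidate is universal.

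\textbf{Step 1: the candidates.} Since $P_5(0)=0$, any sum containing a universal sub-sum is itself universal, so a sum $\Phi_{\alpha_1,\dots,\alpha_5}$ (ordered $\alpha_1\le\cdots\le\alpha_5$) is \emph{proper} universal precisely when it is universal and all four of its quaternary sub-sums fail to be universal. As in Theorem \ref{quaternary}, $1\to\Phi$ forces $\alpha_1=1$, the truant $3$ of $\Phi_1$ forces $\alpha_2\in\{1,2,3\}$, and — because $\Phi_{\alpha_1,\alpha_2,\alpha_3}$ must be non-universal — \eqref{pent} forces $(\alpha_1,\alpha_2,\alpha_3)$ to be one of $(1,1,7),(1,1,11),(1,2,5),(1,2,7),(1,3,5)$. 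For each of these except $(1,1,11)$, the truant computations of Theorem \ref{quaternary} (e.g.\ the truant $25$ of $\Phi_{1,1,7}$ forces $\alpha_4\le25$, and then non-universality of $\Phi_{1,1,7,\alpha_4}$ forces $\alpha_4\in\{8,9,10\}$) confine $\alpha_4$ to a set on which $\Phi_{\alpha_1,\alpha_2,\alpha_4}$ is one of the universal ternary sums of \eqref{pent}, contradicting properness; hence $(\alpha_1,\alpha_2,\alpha_3)=(1,1,11)$. The truant $43$ of $\Phi_{1,1,11}$ gives $\alpha_4\le43$, and non-universality of $\Phi_{1,1,11,\alpha_4}$ (Theorem \ref{quaternary}, row (4-2)) then forces $\alpha_4\in\{22,33\}$. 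Finally, $\alpha_5$ is at most the truant of the quaternary base ($98$ for $\Phi_{1,1,11,22}$, $109$ for $\Phi_{1,1,11,33}$), while non-universality of the sub-sum $\Phi_{1,1,11,\alpha_5}$ eliminates those $\alpha_5\ge\alpha_4$ lying in $\{23,\dots,43\}\setminus\{33\}$, and non-universality of the remaining quaternary sub-sums (none of which appears in Table 3.1) costs nothing further. This leaves exactly the $2+55=57$ candidates of row (5-1) and the $1+66=67$ candidates of row (5-2), i.e.\ $124$ in all.

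\textbf{Step 2: universality of the $124$ sums.} By the translation in Section 2, $\Phi_{1,1,11,22,\alpha}$ is universal iff
$$x_1^2+x_2^2+11x_3^2+22x_4^2+\alpha x_5^2=24N+35+\alpha$$
has a solution with $\gcd(x_1x_2x_3x_4x_5,6)=1$ for every $N\ge0$, and similarly for $\Phi_{1,1,11,33,\alpha}$ with $46+\alpha$. Fixing $x_4=d_4$ and $x_5=d_5$ with $d_4,d_5$ coprime to $6$ reduces this to
$$x^2+y^2+11z^2=24N+35-22d_4^2+\alpha(1-d_5^2)$$
(and $46-33d_4^2+\alpha(1-d_5^2)$ in the other family), whose right-hand side is $\equiv 13\Mod{24}$ since $d^2\equiv1\Mod{24}$ whenever $\gcd(d,6)=1$. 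This is precisely the ternary situation handled in Case (4-2): after extracting the even part of the power of $11$ (which preserves the residue $13\Mod{24}$ because $11^2\equiv1\Mod{24}$), the form $(6x+y)^2+y^2+11z^2$ represents every positive integer $\equiv13\Mod{24}$ that is not of the form $k\cdot11^{2l+1}$ with $\left(\frac k{11}\right)=-1$ nor $1309\cdot11^{2l}$, via the relations $M_3\prec_{24,13}M_2$, $M_4\prec_{24,13}M_2$ and the escalation by $T_1,T_2$ on $P_1,P_2$, and the resulting solution automatically satisfies $\gcd(xyz,6)=1$. It therefore suffices to show that for all but finitely many $N$ one can choose $(d_4,d_5)$, among the admissible residues modulo a small modulus, making the right-hand side positive and free of all exceptional values; the finitely many remaining $N$ are checked directly by the same MAPLE-based computation used in Theorem \ref{quaternary}. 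Combined with Step 1 (which already exhibits the non-universal or improper behaviour at the excluded $\alpha$), this yields exactly $124$ quinary proper universal sums.

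\textbf{Where the work is.} Step 1 is bookkeeping, but it must be done so that \emph{every} quaternary sub-sum is tested against the (already established) completeness of Table 3.1 at lower rank. The substance is Step 2, and the delicate point is the exceptional set: for each $\alpha$ in rows (5-1)–(5-2) one must guarantee a choice of $(d_4,d_5)$ — possibly after the $11$-power extraction, which genuinely matters since some $\alpha$ are divisible by $11$ — that simultaneously makes the ternary target positive and disjoint from $\{k\cdot11^{2l+1}:\left(\tfrac k{11}\right)=-1\}\cup\{1309\cdot11^{2l}\}$, a short Legendre-symbol/residue count, together with the explicit finite verification of the small cases, which is the only genuinely computational ingredient.
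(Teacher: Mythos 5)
Your proposal is correct and takes essentially the same approach as the paper: the candidate list $(1,1,11,22,\alpha)$, $(1,1,11,33,\alpha)$ with $\alpha$ bounded by the truants $98$ and $109$ comes from Theorem \ref{quaternary} exactly as you describe, and universality is reduced to the Case (4-2) analysis of $x^2+y^2+11z^2$ on the progression $13 \Mod{24}$, with the substituted squares chosen to dodge the exceptional values $k\cdot 11^{2l+1}$ and $1309\cdot 11^{2l}$ and a finite computer check for small targets. The only cosmetic difference is that the paper first establishes near-universality of the quaternary sums $\Phi_{1,1,11,22}$ and $\Phi_{1,1,11,33}$ (failing only at $98$, resp.\ at $24N+46=2\cdot 11^{2s+1}$) and then adjoins the fifth square, whereas you substitute both extra variables at once.
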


\begin{proof}
For positive integers $\alpha_1,\alpha_2,\dots,\alpha_5$, let $\Phi_{\alpha_1,\alpha_2,\dots,\alpha_5}$ be a proper universal sum of generalized pentagonal numbers. From the above theorem, 
we may assume that $(\alpha_1,\alpha_2,\alpha_3,\alpha_4)=(1,1,11,22)$ or $(1,1,11,33)$.
Now, one may directly check that the remaining candidates $\Phi_{1,1,11,22}$ and $\Phi_{1,1,11,33}$ are not universal and their truants are $98$ and $109$, respectively.
If $(\alpha_1,\alpha_2,\alpha_3,\alpha_4)=(1,1,11,22)$, then $\alpha_5\leq98$, for the truant of $\Phi_{1,1,11,22}$ is $98$ and $98\rightarrow\Phi_{1,1,11,22,\alpha_5}$.  
Similarly, if $(\alpha_1,\alpha_2,\alpha_3,\alpha_4)=(1,1,11,33)$, then $\alpha_5\leq109$.

\vskip 0.5pc
\noindent\textbf{Case (5-1)} $(\alpha_1,\alpha_2,\alpha_3,\alpha_4)=(1,1,11,22)$.
It is enough to show that for any $\alpha$ such that $\alpha=22,33$ or $44\leq\alpha\leq98$, the equation 
$$
x^2+y^2+11z^2+22t^2+\alpha s^2=24N+35+\alpha
$$ 
has an integer solution $(x,y,z,t,s)\in\mathbb{Z}^5$ such that $\text{gcd}(xyzts,6)=1$ for any non-negative integer $N$.
Similarly as in the proof of Case (4-2), one may show that the equation
$$
x^2+y^2+11z^2+22t^2=24N+35
$$
has an integer solution $(x,y,z,t)\in\mathbb{Z}^4$ such that $\text{gcd}(xyzt,6)=1$ for any non-negative integer $N$ except $98$.
The proof follows immediately.
\vskip 0.5pc
\noindent\textbf{Case (5-2)} $(\alpha_1,\alpha_2,\alpha_3,\alpha_4)=(1,1,11,33)$.
It is enough to show that for any $\alpha$ such that $\alpha=33$ or $44\leq\alpha\leq109$, the equation 
$$
x^2+y^2+11z^2+33t^2+\alpha s^2=24N+46+\alpha
$$ 
has an integer solution $(x,y,z,t,s)\in\mathbb{Z}^5$ such that $\text{gcd}(xyzts,6)=1$ for any non-negative integer $N$.
Similarly as in the proof of Case (4-2), one may show that if 
$24N+46\not\eq2\cdot11^{2s+1}$ for any $s\geq1$, then the equation 
$$
x^2+y^2+11z^2+33t^2=24N+46
$$ 
has an integer solution $(x,y,z,t)\in\mathbb{Z}^4$ such that $\text{gcd}(xyzt,6)=1$.
If $24N+46\not\eq 2\cdot11^{2s+1}$ for any integer $s\geq1$, then the equation
$$
x^2+y^2+11z^2+33t^2=24N+46+\alpha-\alpha\cdot1^2
$$ 
has an integer solution  $(x,y,z,t)\in\mathbb{Z}^4$ such that $\text{gcd}(xyzt,6)=1$.
If $24N+46=2\cdot11^{2s+1}$ for some integer $s\geq1$, then the equation
$$
x^2+y^2+11z^2+33t^2=24N+46+\alpha-\alpha\cdot5^2
$$ 
has an integer solution  $(x,y,z,t)\in\mathbb{Z}^4$ such that $\text{gcd}(xyzt,6)=1$ since $24N+46+\alpha-\alpha\cdot5^2\not\equiv0\Mod{11^2}$. 
This completes the proof.
\end{proof}

We provide an effective criterion on the universality of an arbitrary sum of generalized pentagonal numbers.
\begin{thm} For a positive integer $k$,
let $\alpha_1,\alpha_2,\dots,\alpha_k$ be positive integers. An arbitrary sum 
$$
\Phi_{\alpha_1,\alpha_2,\dots,\alpha_k}(x_1,x_2,\dots,x_k)=\alpha_1P_5(x_1)+\alpha_2P_5(x_2)+\cdots+\alpha_kP_5(x_k)
$$ 
of generalized pentagonal numbers is universal if and only if it represents
$$
1,\ 3,\ 8,\ 9,\ 11,\ 18,\ 19,\ 25,\ 27,\ 43,\ 98,\ \text{and}\ 109.
$$ 
\end{thm}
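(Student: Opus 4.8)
The ``only if'' part is trivial, since a universal sum represents every non-negative integer. For the converse I would run the escalator (tree) argument, which becomes short once Theorems \ref{quaternary}, \ref{quinary} and the ternary classification \eqref{pent} are available. Suppose $\Phi_{\alpha_1,\dots,\alpha_k}$ represents each of the twelve integers $1,3,8,9,11,18,19,25,27,43,98,109$, and reindex so that $\alpha_1\le\alpha_2\le\cdots\le\alpha_k$. The plan is to prove, by induction on $j$, that each partial sum $\Phi_{\alpha_1,\dots,\alpha_j}$ is a node of the escalator tree of sums of generalized pentagonal numbers implicitly built in Section 3, using the fact that this tree has finite depth and exactly twelve non-universal nodes, whose truants are precisely the twelve listed integers.

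First I would record the structure of the tree. The empty sum has truant $1$; since $1\rightarrow\Phi_{\alpha_1,\dots,\alpha_k}$ and the least positive generalized pentagonal number is $1$, this forces $\alpha_1=1$, so $\Phi_1$ is a node, and its truant is $3$. The only level-two nodes are $\Phi_{1,1},\Phi_{1,2},\Phi_{1,3}$, with truants $11,8,9$ (computed in the proof of Theorem \ref{quaternary}); by \eqref{pent} the only non-universal level-three nodes are $\Phi_{1,1,7},\Phi_{1,1,11},\Phi_{1,2,5},\Phi_{1,2,7},\Phi_{1,3,5}$, with truants $25,43,18,27,19$; by Theorem \ref{quaternary} the only non-universal level-four nodes are $\Phi_{1,1,11,22}$ and $\Phi_{1,1,11,33}$, with truants $98$ and $109$; and by Theorem \ref{quinary} every level-five node is universal. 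Part of this step is checking that every child of a non-universal node that is not among the nodes just listed is in fact universal because some universal proper partial sum sits inside it --- for example $\Phi_{1,1,8}$ lies inside $\Phi_{1,1,7,8}$, $\Phi_{1,3,9}$ lies inside $\Phi_{1,3,5,9}$, and $\Phi_{1,1,11,\alpha}$ lies inside $\Phi_{1,1,11,22,\alpha}$ for $\alpha\in\{23,\dots,43\}\setminus\{33\}$ --- which is exactly the bookkeeping that makes the tree finite.

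For the inductive step, suppose $\Phi_{\alpha_1,\dots,\alpha_j}$ is a node. If it is universal we are done, since it is a partial sum of $\Phi_{\alpha_1,\dots,\alpha_k}$. Otherwise let $t$ be its truant, so $t$ is one of the twelve listed integers and hence $t\rightarrow\Phi_{\alpha_1,\dots,\alpha_k}$, while $t$ is not represented by $\Phi_{\alpha_1,\dots,\alpha_j}$. In any identity $t=\sum_{i=1}^k\alpha_iP_5(x_i)$ there must then be an index $i>j$ with $P_5(x_i)\ne 0$; since every nonzero generalized pentagonal number is at least $1$ and the $\alpha_i$ are nondecreasing, this yields $\alpha_{j+1}\le\alpha_i\le\alpha_iP_5(x_i)\le t$, so $\Phi_{\alpha_1,\dots,\alpha_{j+1}}$ is again a node. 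Since every non-universal node has depth at most four, the induction cannot continue indefinitely, so it halts at a universal partial sum $\Phi_{\alpha_1,\dots,\alpha_m}$; as $\Phi_{\alpha_1,\dots,\alpha_m}$ is a partial sum of $\Phi_{\alpha_1,\dots,\alpha_k}$, the latter is universal.

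The genuinely hard part lies entirely outside this argument: it is the content of Theorems \ref{quaternary} and \ref{quinary} (together with \eqref{pent}), namely that the escalator tree really does terminate at depth five with exactly the twelve non-universal nodes listed above, so that in particular its set of truants equals $\{1,3,8,9,11,18,19,25,27,43,98,109\}$. Granting that input, the theorem follows from the finite walk down the tree described here.
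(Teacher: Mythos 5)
Your proposal is correct and is essentially the paper's argument: the paper compresses the entire escalator walk into the single sentence that one may find a partial sum $\Phi_{\alpha_1,\dots,\alpha_u}$ appearing in Table 3.1, which is exactly the induction you spell out, with the same twelve truants $1,3,8,9,11,18,19,25,27,43,98,109$ attached to the same non-universal nodes. No substantive difference.
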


\begin{proof} 
Without loss of generality, we may assume that $\alpha_1\leq\alpha_2\leq \cdots\leq\alpha_k$.
If the sum $\Phi_{\alpha_1,\alpha_2,\dots,\alpha_k}$ of generalized pentagonal numbers represents above $12$ integers, then one may easily show that there is an integer $u$ such that $1\leq u\leq k$ and the partial sum $\Phi_{\alpha_1,\alpha_2,\dots,\alpha_u}$ of $\Phi_{\alpha_1,\alpha_2,\dots,\alpha_k}$ is one of the sums in Table 3.1. 
Then the partial sum $\Phi_{\alpha_1,\alpha_2,\dots,\alpha_u}$ of $\Phi_{\alpha_1,\alpha_2,\dots,\alpha_k}$ is universal by \eqref{pent} and Theorems \ref{quaternary} and \ref{quinary}. 
Therefore, the sum $\Phi_{\alpha_1,\alpha_2,\dots,\alpha_k}$ of generalized pentagonal numbers is universal.
\end{proof}


\begin{thebibliography}{abcd}

\bibitem {b} M. Bhargava, {\em On the Conway-Schneeberger fifteen theorem}, Contem. Math. \textbf{272}(2000), 27--38. 

\bibitem{BH} M. Bhargava and J. Hanke,
{\em Universal quadratic forms and the 290 theorem},
to appear in Invent. Math.

\bibitem{bk}  W. Bosma and B. Kane, {\em The triangular theorem of eight and representation by quadratic polynomials},  Proc. Amer. Math. Soc. \textbf{141}(2013), 1473-1486. 

\bibitem{dick} L. E. Dickson,
{\em Quaternary quadratic forms representing all integers},
Amer. J. Math.  \textbf{49}(1927), 39-56.

\bibitem{Jagy} W. C. Jagy,
{\em Five regular or nearly-regular ternary quadratic forms},
Acta Arith. \textbf{77}(1996), 361-367.

\bibitem{Jones} B. W. Jones, 
{\em Representation by Positive Ternary Quadratic Forms},
unpublished Ph.D. dissertation, University of Chicago, 1928. 

\bibitem{octagonal} J. Ju and  B.-K. Oh.
{\em Universal sums of generalized octagonal numbers},
J. Number Theory \textbf{190}(2018), 292-302.

\bibitem{poly} J. Ju,  B.-K. Oh, and B. Seo,
{\em Ternary universal sums of generalized polygonal numbers},
to appear in Int. J. Number Theory.

\bibitem {ki} Y. Kitaoka, 
{\em Arithmetic of quadratic forms}, 
Cambridge University Press, 1993.


\bibitem {regular} B.-K. Oh, 
{\em Regular positive ternary quadratic forms}, 
Acta Arith. \textbf{147}(2011), 233-243. 

\bibitem{oy} B.-K. Oh and H. Yu,
{\em Completely $p$-primitive binary quadratic forms}, 
J. Number Theory \textbf{193}(2018), 373-385. 

\bibitem {pentagonal}  B.-K. Oh, 
{\em Ternary universal sums of generalized pentagonal numbers},  
J. Korean Math. Soc. \textbf{48}(2011), 837-847.

\bibitem{om} O. T. O'Meara, 
{\em Introduction to quadratic forms},
Springer Verlag, New York, 1963.

\bibitem{sun} Z.-W. Sun,
{\em A result similar to Lagrange's theorem},
J. Number Theory \textbf{162}(2016), 190-211.

\bibitem {sun2} Z.-W. Sun, 
{\em On universal sums of polygonal numbers}, 
Sci. China Math. \textbf{58}(2015), 1367-1396.

\end{thebibliography}
\end{document}